\newtheorem{lemma}{Lemma}[section]
\newtheorem{theorem}{Theorem}[section]
\theoremstyle{remark}
\newtheorem{remark}{Remark}[section]
\DeclareMathOperator{\grad}{grad}
\DeclareMathOperator{\Hess}{Hess}
\numberwithin{equation}{section}
\begin{document}
\title{Apéry Polynomials and the multivariate Saddle Point Method}
\author{Thorsten Neuschel%
  \thanks{Department of Mathematics, KU Leuven, Celestijnenlaan 200B box 2400, BE-3001 Leuven, Belgium. This work is supported by KU Leuven research grant OT\slash12\slash073. E-mail: Thorsten.Neuschel@wis.kuleuven.be}}

 \date{\today}

\maketitle

\paragraph{Abstract}
The Apéry polynomials and in particular their asymptotic behavior play an essential role in the understanding of the irrationality of \(\zeta(3)\). In this paper, we present a method to study the asymptotic behavior of the sequence of Apéry polynomials \((B_{n})_{n=1}^{\infty}\) in the whole complex plane as \(n\rightarrow \infty\). The proofs are based on a multivariate version of the complex saddle point method. Moreover, the asymptotic zero distributions for the polynomials \((B_{n})_{n=1}^{\infty}\) and for some transformed Apéry polynomials are derived by means of the theory of logarithmic potentials with external fields, establishing a characterization as the unique solution of a weighted equilibrium problem. The method applied is a general one, so that the treatment can serve as a model for the study of objects related to the Apéry polynomials.

\paragraph{Keywords} Asymptotics; Asymptotic Distribution of Zeros; Multivariate Saddle point Method; Apéry Numbers; Apéry Polynomials; Weighted Equilibrium Problem

\paragraph{Mathematics Subject Classification (2010)}   30E15 · 41A60
\section{Introduction}

In his famous proof of the irrationality of \(\zeta(3)\), Apéry (see \cite{Apery}) uses the asymptotic behavior of the binomial sums
\[b_n = \sum_{k=0}^n \binom{n}{k}^2 \binom{n+k}{k}^2.\]
Using the three term polynomial recurrence
\begin{equation}\label{Rec}n^3 b_n - (34 n^3 -51 n^2 +27 n -5) b_{n-1} +(n-1)^3 b_{n-2} =0,
\end{equation}
it is easy to see that 
\[\lim_{n\rightarrow \infty} \frac{b_{n+1}}{b_n} = (1+\sqrt{2})^4,\]
showing that the (nowadays called) Apéry numbers \(b_n\) cannot satisfy a two term recurrence. It is attributed to Cohen (see \cite{Poorten}, but see also Hirschhorn \cite{Hirschhorn}) that we know more precisely 
\begin{equation}\label{Cohen}b_n \sim \frac{(1+\sqrt{2})^{4n+2}}{(2 \pi n \sqrt{2})^{3/2}},
\end{equation}
as \(n \rightarrow \infty\), meaning that the quotient of both sides converges to unity as \(n\) tends to infinity. McIntosh generalized this (see \cite{McIntosh}), giving complete asymptotic expansions for binomial sums of the general form
\[\sum_{k=0}^n \binom{n}{k}^{r_0} \binom{n+k}{k}^{r_1}\cdot\ldots\cdot \binom{n+mk}{k}^{r_m},\]
where \(m\) and \(r_0,\ldots,r_m\) are fixed nonnegative integers. 

Since their introduction, the Apéry numbers have been studied extensively, mostly from the perspective of number theory. Congruences and arithmetical properties have been investigated, for example, by Beukers (see \cite{Beukers2}) and by Garaev, Luca and Shparlinski (see \cite{Garaev}, \cite{Luca}). Convexity properties have been explored by Chen and Xia (see \cite{Chen}), and the recurrence relation (\ref{Rec}) has been generalized by Askey and Wilson (see \cite{Askey}).

However, the binomial sums \(b_n\) do not appear haphazardly in Apéry's proof. Studied from the perspective of Hermite-Padé approximation they emerge as evaluations of certain polynomials, which arise quite naturally in this context. In fact, the proof of the irrationality of \(\zeta(3)\) fits into the theory of Hermite-Padé approximation the following way (see Beukers \cite{Beukers1} and Van Assche \cite{Assche1}, \cite{Assche2}):
Let three Markov functions be defined by 
\[f_1 (z) =\int\limits_0^1\frac{dx}{z-x},\quad f_2 (z) =-\int\limits_0^1 \log x \frac{dx}{z-x},\quad f_3 (z) =\frac{1}{2}\int\limits_0^1 \log^2 x \frac{dx}{z-x},\]
so that we have \(f_3 (1)=\zeta(3)\). Now, we consider the following approximation problem: Find polynomials \(A_n , B_n, C_n\) and \(D_n\) of degree at most \(n\) such that
\begin{align*}
A_n (z) &= \mathcal{O}(z-1),\quad z\rightarrow 1\\
A_n (z)f_1 (z) + B_n(z) f_2 (z) - C_n(z) &= \mathcal{O}(z^{-n-1}),\quad z\rightarrow \infty\\
A_n (z)f_2 (z) + 2 B_n(z) f_3 (z) - D_n(z) &= \mathcal{O}(z^{-n-1}),\quad z\rightarrow \infty.
\end{align*}
It turns out (see \cite[p. 222]{Assche1}), that the polynomials \(B_n\) in the solution of this approximation problem are given by the nowadays called Apéry polynomials
\begin{equation}\label{AP}B_n (z)=\sum_{k=0}^n \binom{n}{k}^2 \binom{n+k}{k}^2 z^k,
\end{equation}
and that \(2B_n(1)f_3 (1)-D_n(1) = 2b_n\zeta(3)-D_n(1)\) gives a rational approximation to \(\zeta(3)\), which is suitable to prove irrationality.

According to these considerations it is natural to investigate the Apéry polynomials in (\ref{AP}) in view of their number theoretic properties, as well as in view of their asymptotic behavior. In the former sense, congruences und arithmetic properties have been studied by Sun (see \cite{Sun}). 

In this paper we are concerned with the asymptotic behavior of the Apéry polynomials in the complex plane. To this end, we make use of a multivariate version of the complex saddle point method in order to study the behavior of \(r\)-fold complex integrals of the form
\[\int\limits_{[-a,a]^r} e^{-n p(t)} q(t) dt,\]
where \(t=(t_1,\ldots,t_r)\), as \(n\rightarrow \infty\). Here, \(p\) and \(q\) are holomorphic functions on a domain of \(\mathbb{C}^r\) containing \([-a,a]^r\). In many cases, this version is suitable to study asymptotics of multivariate complex contour integrals of the form
\[\int\limits_{\Gamma} e^{-n p(t)} q(t) dt,\]
where \(\Gamma\) is an \(r\)-fold product of contours in the complex plane (see also Remark \ref{RE2}). In our applications the quantity \(n\) will be an integer, as we are dealing with polynomials, but this is not of importance for the method itself. A generalization of the saddle point method from the one-dimensional case to the multi-dimensional case is not new. Although a rigorous treatment apparently did not find its way into the present anglophone standard literature on asymptotics, a brief sketch of the general method can be found in the Russian translation \cite[p. 124]{Gamkrelidze}. However, for the proof the reader is referred to a Russian work by Fedoryuk (see \cite{Fedoryuk}), which may be difficult to access. A short description of the method without carrying out the proofs in detail can also be found in \cite[p. 18]{Zinn}. For this reason and for the purpose of self-containment of this exposition, we will provide in Section 2 a sufficient general version of the required method (Theorem \ref{MSP}) together with a convenient short rigorous proof.

Preparing for the proofs of the main results, we will state in Section 3 some auxiliary results. The first one, Lemma \ref{Z}, will be a simple preliminary localization of the zeros of the Apéry polynomials, showing that all zeros are real and negative. After that, in Lemma \ref{EST}, we will deal with an elementary inequality for the modulus of a specific function of several complex variables on the boundary of the unit disk, which will be important for the subsequent analysis of the Apéry polynomials in Theorem \ref{MSP}. The fourth and final section contains the main results, which give an asymptotic description of the sequence of the Apéry polynomials in the whole complex plane. In Theorem \ref{MAIN1}, we show that we have for \(z\) located in the zero-free domain \(\mathbb{C}\backslash(-\infty,0]\)

\[
B_{n} (z) \sim \frac{(1+\sqrt{z}+\sqrt{z+\sqrt{z}})^2}{2 (2\pi n \sqrt{z+\sqrt{z}})^{3/2}}\,\left(1+2\sqrt{z}+2\sqrt{z+\sqrt{z}}\right)^{2n}~,
\]
as \(n\rightarrow \infty\), where \(\sqrt{z} = \exp\left\{\frac{1}{2} (\log|z| +i\arg(z))\right\}\) with \(\arg(z)\in (-\pi, \pi)\). In particular, for \(z=1\) we regain the result (\ref{Cohen}). On the other hand, on the region of zeros, that is on the negative real axis, we obtain in Theorem \ref{MAIN2}

\begin{align*}
B_n (-x) =& \frac{1}{(2\pi n)^{3/2}}\,\frac{(1+\sin(\frac{\pi}{2}\theta))(1+\frac{1}{2}\tan^2(\frac{\pi}{2}\theta))}{(\frac{1}{2}\tan(\frac{\pi}{2}\theta) \sqrt{1+\cos^2(\frac{\pi}{2}\theta)})^{3/2}} \, \left\{\frac{1+\sin(\frac{\pi}{2}\theta)}{\cos(\frac{\pi}{2}\theta)}\right\}^{2n} \\\nonumber
& \times \left(\cos\left\{f(\theta)+n\pi\theta\right\}+ o(1)\right),
\end{align*}
as \(n\rightarrow \infty\), where the phase shift \(f(\theta)\) is given by
\[f(\theta)=2\arctan\left(\frac{\tan(\frac{\pi}{2}\theta) (1+\sin(\frac{\pi}{2}\theta))}{2+\sin(\frac{\pi}{2}\theta)}\right)-\frac{3}{2}\arctan\left(\frac{1}{\cos(\frac{\pi}{2}\theta)}\right)\]
and \(x>0\) is suitably parameterized by
\[x=\frac{\sin^4 (\frac{\pi}{2}\theta)}{4 \cos^2(\frac{\pi}{2}\theta)},~~~~~0<\theta<1.\]
We will not be concerned with the uniformity of these approximations on compact subsets of the range of their validity, as this will not be necessary for the study of the zeros. To characterize the behavior of the zeros of the polynomials \(B_n\), it turns out to be convenient to translate the problem to the compact interval \([-1,1]\). Thus, in the first instance, we will consider the polynomials
\[\tilde{B}_n(z)=(z+1)^n B_n \left(\frac{z-1}{z+1}\right)=\sum_{k=0}^n \binom{n}{k}^2 \binom{n+k}{k}^2 (z-1)^k (z+1)^{n-k}.\]
In the first part of Theorem 4.3, we show that the sequence of the normalized zero counting measures \((\nu_n)_n\) associated with \((\tilde{B}_n)_n\) converges in the weak-star sense to a unit measure \(\nu\) supported on \([-1,1]\). This measure can be characterized as the unique solution for the weighted equilibrium problem on \([-1,1]\) with respect to the weight function 
\[w(x)={\left|\sqrt{x+1}+2\sqrt{x-1}+2\sqrt{x-1+\sqrt{x-1}\sqrt{x+1}}\right|}^{-2},\] 
where all roots are defined by \(\sqrt{z} = \exp\left(\frac{1}{2}(\log|z|+i \arg(z))\right)\) with \(\arg(z)\in(-\pi,\pi]\). Moreover, the measure \(\nu\) is absolutely continuous with respect to the Lebesgue measure on \([-1,1]\) and with Radon-Nikodym derivative given explicitly by
\[d\nu=\frac{1}{\pi \sqrt{1+x} (1-x)^{3/4} (\sqrt{2}+\sqrt{1-x})^{1/2}} ~dx.\]
In the second part of Theorem 4.3, we translate these results to the Apéry polynomials, showing that the sequence of the normalized zero counting measures \((\mu_n)_n\) associated with \((B_n)_n\) converges in the weak-star sense to a unit measure \(\mu\) supported on \((-\infty,0]\), which is absolutely continuous with respect to the Lebesgue measure and with Radon-Nikodym derivative given explicitly by
\[d\mu=\frac{1}{\sqrt{2}\pi |x|^{3/4} (1+|x|)^{1/2} (\sqrt{|x|}+\sqrt{1+|x|})^{1/2}} ~dx.\]

We emphasize that the whole method applied is a general one. Merely the form of the auxiliary section, especially Lemma \ref{EST}, is closely connected to the fact that we are treating the sequence of Apéry polynomials here. For instance in \cite{Abel}, using the arguments of the presented method in an ad hoc way, the authors have obtained asymptotic descriptions for the binomial polynomials 
\[Q_n^{(r)} (z)=\sum_{k=0}^n \binom{n}{k}^{r+1} z^k\]
on the whole complex plane, thereby generalizing classical results on the Legendre polynomials. It turns out, that many related sequences of polynomials, that is, series of generalized hypergeometric polynomials after ``proper" rescaling, admit a suitable multivariate complex integral representation that is amendable to this analysis.

Throughout the paper, all potential theoretic notions are used as defined in the book of Saff and Totik \cite{Saff/Totik}.

\section{A multivariate saddle point method}

In this section we will provide one of the main tools for our studies of the asymptotic behavior of the Apéry polynomials. As pointed out in the introduction, a generalization of the one-dimensional saddle point method (as shown, for instance, in \cite[p. 125]{Olver}) to the multivariate case is not new (see \cite{Fedoryuk}). For clarity and self-containment of this exposition, we present a version of this method which is adequate for the applications we have in mind.

\begin{theorem}\label{MSP}
Let \(p\) and \(q\) be holomorphic functions on a complex domain \(D\subset \mathbb{C}^r\) with \([-a,a]^r \subset D\) for a number \(a>0\), and let
\[I(n)=\int\limits_{[-a,a]^r} e^{-n p(t)} q(t) dt,\]
where \(t=(t_1,\ldots,t_r)\). Moreover, let \(t=0\) be a simple saddle point of the function \(p\), which means that we have for the complex gradient
\[\grad p (0) = 0\]
and for the Hessian
\[\det\Hess p (0) \neq 0.\]
Furthermore, suppose that, considered as a real-valued function on \([-a,a]^r\), \(\Re [p (w)]\) attains its minimum exactly at the point \(t=0\) with \(\det\Re\Hess p (0) \neq 0\) and \(q(0)\neq 0\). Then we have
\begin{equation}\label{A}I(n)\sim \left(\frac{2\pi}{n}\right)^{r/2} e^{-n p(0)} \frac{q(0)}{\sqrt{\det\Hess p (0)}},
\end{equation}
as \(n\rightarrow \infty\).
\end{theorem}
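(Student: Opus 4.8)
The plan is to reduce the claim to a finite‑dimensional Laplace‑type estimate and then to a Gaussian integral, exploiting the hypothesis that $\Re[p]$ has a genuine (nondegenerate) minimum on the real cube, which makes contour deformation unnecessary. First I would normalise: replacing $p$ by $p-p(0)$ pulls the factor $e^{-np(0)}$ out of $I(n)$, so one may assume $p(0)=0$, and the target becomes $n^{r/2}I(n)\to(2\pi)^{r/2}q(0)/\sqrt{\det\Hess p(0)}$.

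The first genuine step is localisation. Since $\Re[p]$ is continuous on the compact cube and attains its minimum value $0$ only at the origin, for each $\delta\in(0,a)$ there is $m_\delta>0$ with $\Re[p(t)]\ge m_\delta$ on $[-a,a]^r\setminus(-\delta,\delta)^r$; hence the part of $I(n)$ over that set is bounded by $(2a)^r\max_{[-a,a]^r}|q|\cdot e^{-nm_\delta}$, which decays faster than any power of $n$ and is therefore negligible against the expected main term of order $n^{-r/2}$. So it suffices to analyse $J(n)=\int_{[-\delta,\delta]^r}e^{-np(t)}q(t)\,dt$ for a conveniently small $\delta$.

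Next I would Taylor‑expand around the saddle. Since $p$ is holomorphic, hence $C^\infty$, and $\grad p(0)=0$, one has $p(t)=\tfrac12\,t^{T}\Hess p(0)\,t+O(|t|^3)$ uniformly for real $t$ near $0$; taking real parts and noting that the real Hessian of $t\mapsto\Re[p(t)]$ at the origin is exactly $\Re\Hess p(0)$, which is positive semidefinite (Hessian of $\Re p$ at an interior minimum) and nondegenerate by hypothesis, hence positive definite, I can fix $\delta$ so small that $\Re[p(t)]\ge c|t|^2$ on $[-\delta,\delta]^r$ for some $c>0$. Now rescale $t=s/\sqrt n$, obtaining $n^{r/2}J(n)=\int_{\mathbb{R}^r}\mathbbm{1}_{[-\delta\sqrt n,\,\delta\sqrt n]^r}(s)\,e^{-np(s/\sqrt n)}q(s/\sqrt n)\,ds$. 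For each fixed $s$ the integrand tends to $q(0)\,e^{-\frac12 s^{T}\Hess p(0)\,s}$, and it is dominated uniformly in $n$ by $\max_{[-\delta,\delta]^r}|q|\cdot e^{-c|s|^2}\in L^1(\mathbb{R}^r)$, because $\Re[np(s/\sqrt n)]\ge c\,n\,|s/\sqrt n|^2=c|s|^2$ on the relevant range. Dominated convergence then gives $n^{r/2}I(n)\to q(0)\int_{\mathbb{R}^r}e^{-\frac12 s^{T}\Hess p(0)\,s}\,ds$.

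It remains to evaluate this Gaussian integral. For a complex symmetric matrix $A$ with positive definite real part, $\int_{\mathbb{R}^r}e^{-\frac12 s^{T}As}\,ds=(2\pi)^{r/2}(\det A)^{-1/2}$, the branch of the square root being fixed by analytic continuation from the (convex, hence connected) cone of real positive definite matrices, where the formula is classical; both sides are holomorphic in the entries of $A$ on that set by differentiation under the integral sign, so the identity propagates. Applying this with $A=\Hess p(0)$ and reinstating $e^{-np(0)}$ yields \eqref{A}. The only delicate point is the dominated‑convergence step, and inside it the uniform Gaussian majorant: everything hinges on choosing $\delta$ small enough that the cubic Taylor remainder is absorbed into half of the positive definite quadratic form, which is precisely what the assumptions on $\Re\Hess p(0)$ guarantee.
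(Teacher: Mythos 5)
Your proposal is correct and follows essentially the same route as the paper's proof: localisation using the strict-minimum hypothesis, Taylor expansion with the positive definiteness of \(\Re\Hess p(0)\) controlling the remainder, the rescaling \(t=s/\sqrt{n}\) with dominated convergence against a Gaussian majorant, and evaluation of the complex Gaussian integral by analytic continuation from the positive definite case. The only differences are cosmetic — you apply dominated convergence directly to the rescaled integrand rather than to the difference with the pure Gaussian integral, and you spell out the continuation argument for \(\int_{\mathbb{R}^r}e^{-\frac12 s^{T}As}\,ds\) where the paper cites a reference.
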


\begin{remark} The choice of the proper branch for the square root in (\ref{A}) can be described in general by fixing the arguments of the eigenvalues of \(\Hess p (0)\) in a correct manner (see \cite{Fedoryuk}), which is related to the Maslov index. However, in our application of Theorem \ref{MSP}, the determination of the branch will become clear from a simple argument using analytic continuation.
\end{remark}
\begin{remark}\label{RE2}
The formulation of Theorem \ref{MSP} is adapted to the applications we have in mind. It shall be emphasized that it is possible to consider a more general setting. The result can be formulated for integrals on compact smooth manifolds obtaining a complete asymptotic expansion in (\ref{A}), and the assumption \(\det\Re\Hess p (0) \neq 0\) enables us to give a short elementary and convenient proof. However, in many cases the arising integrals can be traced back to the form treated in Theorem \ref{MSP}. On the other hand, not much is known in the case of non simple saddle points or in the case of contributing boundary points (see \cite[p. 125]{Gamkrelidze} for a short discussion). Moreover, we are not concerned with questions of uniformity of the relation (\ref{A}) with respect to further parameters the integrand may depend on (see \cite{Neuschel} for an appropriate consideration of Laplace's method for one-dimensional contour integrals).
\end{remark}

\begin{proof}[Proof of Theorem \ref{MSP}]
First of all, our assumptions imply that the matrix \(\Re\Hess p (0)\) is positive definite, so that we certainly have for arbitrarily small \(\epsilon>0\)
\begin{equation}\label{MSP1}\int\limits_{[-\epsilon,\epsilon]^r} e^{-\frac{n}{2} t^T\Hess p (0)t} q(t) dt \sim \frac{q(0)}{n^{r/2}}\int\limits_{\mathbb{R}^r} e^{-\frac{1}{2} t^T\Hess p (0)t}dt=\left(\frac{2\pi}{n}\right)^{r/2}\frac{q(0)}{\sqrt{\det\Hess p (0)}},
\end{equation}
as \(n\rightarrow \infty\), where the evaluation of the latter integral for a complex matrix can be found, for instance, in \cite[p. 20]{Zinn}. Expanding the function \(p(t)\) in a multivariate Taylor series in the origin, we obtain
\[p(t)=p(0)+\grad p(0)^Tt+\frac{1}{2}t^T\Hess p(0)t+\vert\vert t\vert\vert^2 R(t),\]
where \(R(t)\rightarrow 0\) as \(t\rightarrow 0\) and \(\vert\vert \cdot \vert\vert\) denotes the Euclidian norm on \(\mathbb{C}^r\). Let \(\delta >0\) be defined by 
\[\delta=\min_{t\in\mathbb{R}^r, \vert\vert t\vert\vert=1 } t^T\Re\Hess p(0)t\]
and choose \(\epsilon >0\) such that \(\vert R(t)\vert\leq \frac{\delta}{4}\) on \([-\epsilon, \epsilon]^r\), then we obtain for some positive constant \(C\)
\begin{align*}
&\left|n^{r/2} \left\{ \int\limits_{[-\epsilon,\epsilon]^r}e^{-n (p(t)-p(0))} q(t)dt- \int\limits_{[-\epsilon,\epsilon]^r}e^{-\frac{n}{2} t^T \Hess p(0)t}q(t)dt\right\}\right|\\
&\leq C\int\limits_{\mathbb{R}^r} f_n (t)dt,
\end{align*}
where \(f_n (t) =\mathbbm{1}_{[-\epsilon n,\epsilon n]}(t)e^{-\frac{1}{2} t^T \Re\Hess p(0)t}\left\vert e^{-\vert\vert t\vert\vert^2 R(n^{-1/2}t)}-1\right\vert\).
Now, we clearly have \(f_n (t) \rightarrow 0\), as \(n\rightarrow \infty\), pointwise on \(\mathbb{R}^r\), and \(\vert f_n (t)\vert \leq 2 e^{-\frac{\delta}{4}\vert\vert t\vert\vert^2} \in L_1 (\mathbb{R}^r)\). Using Lebesgue's convergence theorem and taking (\ref{MSP1}) into account, this implies
\begin{equation}\label{MSP2}
\int\limits_{[-\epsilon,\epsilon]^r}e^{-n p(t)} q(t)dt \sim \left(\frac{2\pi}{n}\right)^{r/2} \frac{e^{-np(0)} q(0)}{\sqrt{\det\Hess p (0)}},
\end{equation}
as \(n \rightarrow \infty\). Now, using the condition that \(\Re [p (w)]\) attains its minimum exactly at the point \(t=0\), it is easy to see that the relation (\ref{MSP2}) still remains true if we extend the range of integration to \([-a,a]^r\).
\end{proof}

\section{Auxiliary results}

Our first auxiliary result in this section will be a simple preliminary localization of the zeros of the Apéry polynomials.
\begin{lemma}\label{Z}All zeros of the Apéry polynomials \(B_{n}\) are real and negative.
\end{lemma}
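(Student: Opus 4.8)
The plan is to exhibit the Apéry polynomial $B_n$, up to a positive normalizing constant and a change of variables, as a member of a classical family of orthogonal polynomials, whose zeros are then automatically real and simple, and to arrange the change of variables so that ``real and simple on an interval'' translates into ``real and negative.'' Concretely, the coefficients $c_k=\binom{n}{k}^2\binom{n+k}{k}^2$ are strictly positive, so $B_n(z)=\sum_{k=0}^n c_k z^k$ has no zeros in $[0,\infty)$; it therefore suffices to show that all $n$ zeros are real, since positivity of the coefficients then forces them to lie in $(-\infty,0)$. To see the zeros are real, I would recognize $B_n$ as (a constant multiple of) a hypergeometric polynomial ${}_4F_3$ evaluated at $z$, or better, identify $\tilde B_n(z)=(z+1)^nB_n\!\left(\frac{z-1}{z+1}\right)$ from the introduction — this is the Legendre-type transformation — and show $\tilde B_n$ has all its zeros in the real interval $[-1,1]$.

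The cleanest route I expect to work is via the integral/recurrence structure already advertised in the paper. Equation \eqref{Rec} is a three-term recurrence of the form $n^3 b_n = (34n^3-\cdots)b_{n-1} - (n-1)^3 b_{n-2}$, and the Apéry polynomials $B_n$ themselves satisfy a $z$-dependent three-term recurrence of the shape
\[
\alpha_n(z)\,B_n(z) = \beta_n(z)\,B_{n-1}(z) + \gamma_n(z)\,B_{n-2}(z),
\]
coming from the Hermite–Padé / Markov-function origin described on the first pages. If, after the substitution $z=\frac{x-1}{x+1}$ (equivalently $x=\frac{1+z}{1-z}$), the normalized polynomials $\tilde B_n$ satisfy a genuine three-term recurrence $\tilde B_n = (x - a_n)\tilde B_{n-1} - b_n\,\tilde B_{n-2}$ with real $a_n$ and $b_n>0$, then by Favard's theorem $(\tilde B_n)$ is a sequence of orthogonal polynomials on a subset of $\mathbb R$, hence each $\tilde B_n$ has $n$ distinct real zeros, all lying in the true interval of orthogonality. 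One then checks that this interval is contained in $[-1,1]$ (indeed it should be exactly $[-1,1]$, consistent with Theorem~4.3), and transports back: $x\in(-1,1)$ corresponds to $z=\frac{x-1}{x+1}\in(-\infty,0)$, so every zero of $B_n$ is real and negative.

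An alternative, more self-contained argument avoids Favard and instead uses the known fact (Beukers, and the Hermite–Padé set-up cited as \cite{Assche1}) that $B_n$ arises from an integral representation, e.g.\ of the form $B_n(z)=\int\!\!\int_{[0,1]^2}\big(\text{something}\big)^n\,d\mu$, combined with an oscillation/Sturm-type argument, or uses that the $b_n=B_n(1)$ are connected to orthogonal polynomials for the measure built from $f_1,f_2,f_3$; but the recurrence route is shortest. The main obstacle is purely bookkeeping: writing down the correct three-term recurrence for $B_n(z)$ (or $\tilde B_n$) with explicit coefficients and verifying that, after the Möbius substitution, the recurrence coefficients are real with positive off-diagonal part $b_n>0$ for all $n\ge 2$ — the positivity of $b_n$ is exactly the hypothesis of Favard's theorem that must not be skipped, and it is the only place where a genuine (if elementary) computation is unavoidable. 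Once that is in hand, reality and simplicity of the zeros are immediate, and their negativity follows from the sign of the coefficients of $B_n$ as noted above.
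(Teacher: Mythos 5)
Your opening reduction is fine: the coefficients of \(B_n\) are positive, so no zeros lie in \([0,\infty)\), and it suffices to prove that all zeros are real. The problem is that the core of your argument is conditional on a fact you never establish and which is very doubtful: that the (suitably normalized) polynomials \(\tilde B_n\) satisfy a Favard-type three-term recurrence \(\tilde B_n=(x-a_n)\tilde B_{n-1}-b_n\tilde B_{n-2}\) with real \(a_n\) and \(b_n>0\). The Apéry polynomials come from a Hermite--Padé (multiple orthogonality) problem with several weights, and such families generically satisfy higher-order recurrences rather than a three-term recurrence of Favard form; the three-term recurrence \eqref{Rec} is a recurrence for the evaluations \(b_n=B_n(1)\) with coefficients cubic in \(n\), and its Askey--Wilson generalization to \(B_n(z)\) is not of the shape \((x-a_n)p_{n-1}-b_np_{n-2}\) with an \(x\)-independent, positive last coefficient. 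There is also internal evidence in the paper that your route cannot work as stated: the limiting zero density \eqref{D} of \(\tilde B_n\) has a \((1-x)^{-3/4}\) singularity at \(x=1\), whereas an orthogonal polynomial sequence with convergent recurrence coefficients produces arcsine-type \((1-x)^{-1/2}\) endpoint behaviour. So the ``bookkeeping'' you defer is not bookkeeping --- it is the entire proof, and the object you propose to compute likely does not exist. Even granting Favard, you would still owe an argument that the true interval of orthogonality is contained in \([-1,1]\), which requires bounds on the recurrence coefficients that you also do not supply.

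The paper's proof is short, elementary, and avoids recurrences entirely: the polynomial \(\sum_k\binom{n}{k}^2z^k\) has all zeros real and negative (it is a transformed Legendre polynomial), and the operator \(P(z)\mapsto \frac{d^n}{dz^n}\left(z^nP(z)\right)\), which multiplies the \(k\)-th coefficient by \((n+k)!/k!\), preserves this property: by Gauss--Lucas the zeros after each differentiation stay in the convex hull \([\,\min,0\,]\) of the previous zeros, and after all \(n\) differentiations the constant term is \(n!\,P(0)\neq 0\), so the origin is excluded and all zeros remain negative. Applying this operator twice inserts the two factors \(\binom{n+k}{k}\) and produces \((n!)^2B_n\). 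If you want a complete, self-contained argument, this closure-under-an-operator route is the one to take.
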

\begin{proof}The statement clearly is true for the polynomials
\[\sum_{k=0}^n \binom{n}{k}^{2} z^k,\]
as they can be considered as transformed Legendre polynomials. Alternatively, see \cite{Abel}, Lemma 2.2, for a more general statement. Multiplying by the factor \(z^n\) and deriving \(n\) times, we obtain the polynomials
\[n! \sum_{k=0}^n \binom{n}{k}^{2} \binom{n+k}{k} z^k,\]
for which the statement also is true, due to a repeated application of the Gauss-Lucas theorem. Now, repeating this process once again, we can verify that all zeros of \(B_n\) are real and negative.
\end{proof}

Next, we will deal with a particular estimate, which will arise naturally in the study of the asymptotic behavior of the Apéry polynomials via Theorem \ref{MSP}.
\begin{lemma}\label{EST}Let the functions \(a, b :\mathbb{C}\backslash(-\infty,0]\rightarrow\mathbb{C}\) be defined by
\[a(z)=\sqrt{z+\sqrt{z}}-\sqrt{z},\quad b(z)=\sqrt{z+\sqrt{z}}+\sqrt{z},\]
where all complex roots are confined to their principle value, that is, 
\[\sqrt{z} = \exp\left\{\frac{1}{2} (\log|z| +i\arg(z))\right\}\] 
with \(\arg(z)\in (-\pi, \pi)\).
Moreover, for fixed \(z\in\mathbb{C}\backslash(-\infty,0]\), let \(H :(\partial\mathbb{D})^3 \rightarrow \mathbb{R}\) be defined by
\[H(w_1, w_2, w_3)=\Bigg\vert\frac{(1+b(z)w_3)(1+b(z)(w_1 w_3)^{-1})}{(1-a(z)w_2)(1-a(z)w_1 w_2^{-1})}\Bigg\vert,\]
where \(\partial\mathbb{D}\) denotes the boundary of the complex unit disk.
Then the function \(H\) attains its global maximum exactly at the point \(w_1 = w_2 = w_3 =1\).
\end{lemma}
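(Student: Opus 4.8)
\emph{Strategy.} I would eliminate $w_2$ and $w_3$ by two one–dimensional optimisations, thereby reducing the three–variable problem to a single real inequality, and then dispatch that inequality by elementary estimates. First, the basic facts about $a=a(z)$, $b=b(z)$. From the definitions one reads off $ab=\sqrt z$, $b-a=2\sqrt z$, hence $b(1-2a)=a$, $\tfrac1a-\tfrac1b=2$, $(1+b)(1-2a)=1-a$ and $(1+2b)(1-2a)=1$. Since $\Re\sqrt z>0$ and $z+\sqrt z=\sqrt z(1+\sqrt z)\notin(-\infty,0]$, also $\Re\sqrt{z+\sqrt z}>0$, so $\Re a>0$ and $\Re b>0$; and writing $a=\bigl(1+\sqrt{1+1/\sqrt z}\,\bigr)^{-1}$ and placing $1+\sqrt{1+1/\sqrt z}$ in the region $\{x+iy:\ x>2,\ (x-1)^{2}>1+y^{2}\}$ gives $0<|a|<\tfrac12$. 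From $(1+b)(1-2a)=1-a$ we obtain
\[ H(1,1,1)=\frac{|1+b|^{2}}{|1-a|^{2}}=\frac{1}{|1-2a|^{2}}. \]
The only arithmetic input needed below is the inequality $(1+|a|^{2})\Re a\ge 2|a|^{2}$, equivalently $\Re\!\bigl(a+\tfrac1a\bigr)\ge 2$; this is immediate since $\tfrac1a=2+\tfrac1b$ and both $\Re a>0$ and $\Re(1/b)=\Re b/|b|^{2}>0$.

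\emph{Reduction to one variable.} Fix $w_1\in\partial\mathbb D$, put $\zeta=\sqrt{w_1}$, $\xi=b/\zeta$, $\eta=a\zeta$ (so $|\xi|=|b|$, $|\eta|=|a|$). With $w_3=\rho/\zeta$ one computes $(1+bw_3)\bigl(1+b(w_1w_3)^{-1}\bigr)=1+2\xi\cos\phi+\xi^{2}$, which as $\phi\in\mathbb R$ varies traverses the line segment from $(1-\xi)^{2}$ to $(1+\xi)^{2}$; hence the supremum over $w_3\in\partial\mathbb D$ of the numerator's modulus equals $\max(|1-\xi|^{2},|1+\xi|^{2})$. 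Likewise $(1-aw_2)(1-aw_1w_2^{-1})=1-2\eta\cos\psi+\eta^{2}$ sweeps the segment from $(1-\eta)^{2}$ to $(1+\eta)^{2}$, so the infimum over $w_2\in\partial\mathbb D$ of the denominator's modulus is $\operatorname{dist}\bigl(0,[(1-\eta)^{2},(1+\eta)^{2}]\bigr)$. As the numerator involves only $w_1,w_3$ and the denominator only $w_1,w_2$,
\[ \max_{w_2,w_3}H=\frac{\max(|1-\xi|^{2},|1+\xi|^{2})}{\operatorname{dist}\bigl(0,[(1-\eta)^{2},(1+\eta)^{2}]\bigr)}, \]
and, because $\Re a,\Re b>0$, for $w_1=1$ the maximising $w_3$ and minimising $w_2$ are uniquely $w_3=w_2=1$. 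It therefore remains to show that this quotient is $\le|1-2a|^{-2}$ for every $w_1\in\partial\mathbb D$, with equality only when $w_1=1$.

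\emph{The one–parameter inequality.} Writing $\eta=|a|e^{i\theta}$, the identity $1+\eta^{2}+2\eta t=(1-|a|^{2})+2|a|(|a|\cos\theta+t)e^{i\theta}$ together with projecting $0$ onto the segment's line gives
\[ \operatorname{dist}\bigl(0,[(1-\eta)^{2},(1+\eta)^{2}]\bigr)=\begin{cases}1+|a|^{2}-2|a||\cos\theta|,&(1+|a|^{2})|\cos\theta|\ge 2|a|,\\[1mm](1-|a|^{2})|\sin\theta|,&(1+|a|^{2})|\cos\theta|\le 2|a|.\end{cases} \]
At $\theta=\arg a$ the inequality of the first paragraph places us in the first regime with distance $|1-a|^{2}$ and $\max(|1-\xi|^{2},|1+\xi|^{2})=|1+b|^{2}$, so equality holds there. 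Multiplying through by $|1-2a|^{2}$ and using $|(1-2a)\zeta|=|a|/|b|$ and $|1-2a|^{2}=1-4\Re a+4|a|^{2}$, the first regime reduces to the trigonometric inequality
\[ |1-2a|\,\bigl|\cos\bigl(\theta-\tfrac12\arg z\bigr)\bigr|+|\cos\theta|\ \le\ 2\Bigl(\tfrac{\Re a}{|a|}-|a|\Bigr)=:K, \]
and the second regime to a slacker one. One checks $K>1$ (again a quadratic with negative discriminant, using the region from the first paragraph), so $K$ exceeds the values of the left side at the ``corners'' $\cos\theta=0$ and $\cos(\theta-\tfrac12\arg z)=0$; hence its maximum over the admissible $\theta$ is attained at a smooth critical point, and $\theta=\arg a$ is one — the derivative vanishes there thanks to $\Im b=\Im a/|1-2a|^{2}$ — with second derivative $-K<0$. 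Checking that no competing critical point gives a larger value (on the sector where both cosines are positive the relevant sinusoid is monotone, so $\theta=\arg a$ is the only one; the remaining sectors intersected with the admissible range are handled by direct estimates) completes this regime. The second regime $(1+|a|^{2})|\cos\theta|<2|a|$ occurs only for $\theta$ near $\pm\tfrac\pi2$ (so $w_1$ near $-1$, and never for $z>0$); there one substitutes $\theta=\pm\tfrac\pi2-\epsilon$, so that $|\cos(\theta-\tfrac12\arg z)|=|\sin(\tfrac12\arg z\pm\epsilon)|$ and $|\sin\theta|=\cos\epsilon$, reducing the claim to estimates of the shape $(|1-2a|+|a|)^{2}\le(1-|a|^{2})\cos\epsilon$, which once more amount to quadratics with negative discriminant (for instance $2x^{2}-6x+5>0$). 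The strictness of all these inequalities away from $\theta=\arg a$ yields uniqueness of the maximiser.

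\emph{Where the difficulty lies.} The first two steps are routine; the heart of the matter is the one–parameter inequality. Because the inequality degenerates to an equality at $w_1=1$, no wasteful estimate survives there; and away from $w_1=1$ the two regimes for $\operatorname{dist}$, together with the sign changes of the two cosines, force a genuine (if elementary) case analysis. I expect the ``degenerate'' regime — invisible for $z>0$ — to be the most delicate piece, because there the crude bound $\max(|1-\xi|^{2},|1+\xi|^{2})\le(1+|b|)^{2}$ is too lossy and one must exploit the relation between $\cos(\theta-\tfrac12\arg z)$ and $\sin\theta$.
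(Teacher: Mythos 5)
Your algebraic preliminaries and the reduction to one variable are correct and genuinely attractive: the identities \(ab=\sqrt z\), \(b(1-2a)=a\), \(1/a-1/b=2\), the observation that the numerator and denominator decouple into the segments \([(1-\xi)^2,(1+\xi)^2]\) and \([(1-\eta)^2,(1+\eta)^2]\), and the first-regime argument all check out. Indeed, in the first regime your target \(|1-2a|\,|\cos(\theta-\beta)|+|\cos\theta|\le K\) (with \(\beta=\tfrac12\arg z\)) can be closed cleanly: on each sign-sector of the two cosines the left side is a single sinusoid \(\Re\bigl((1\pm|1-2a|e^{i\beta})e^{-i\theta}\bigr)\); since \(\bar a(1+|1-2a|e^{i\beta})=\overline{a+|a|^2/b}=2\Re a-2|a|^2>0\), the amplitude of the ``both positive'' sinusoid is exactly \(K\) with peak at \(\theta=\arg a\), the ``both negative'' sector is its translate by \(\pi\) (which again corresponds to \(w_1=1\), because \(\theta\) is only defined modulo \(\pi\)), and the mixed sectors have amplitude \(|1-|1-2a|e^{i\beta}|<K\) because \(\cos\beta>0\). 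This is a genuinely different route from the paper, which changes variables to \(\sqrt{1+z}\) with \(\Re z>0\), applies AM--GM in the \(w_3\)-direction, and then performs the critical-point analysis in the \(w_2\)-direction.

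The gap is in your second regime, and it is not a minor one. First, the assertion that \((1+|a|^2)|\cos\theta|<2|a|\) ``occurs only for \(\theta\) near \(\pm\pi/2\)\dots and never for \(z>0\)'' is false: for \(z=1\) one has \(|a|=\sqrt2-1\) and \(2|a|/(1+|a|^2)=1/\sqrt2\), so the regime is \(|\cos\theta|<1/\sqrt2\), i.e.\ half of the circle, and it is entered for every \(z\) as \(w_1\) runs over \(\partial\mathbb D\). Second, the estimate you propose there, namely bounding \(|\cos(\theta-\beta)|\) by \(1\) and \(|\sin\theta|\) from below by its regime-minimum \((1-|a|^2)/(1+|a|^2)\), requires \((|1-2a|+|a|)^2(1+|a|^2)\le(1-|a|^2)^2\), equivalently \(|a|(1+|b|)\sqrt{1+|a|^2}\le|b|(1-|a|^2)\). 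This is emphatically not ``a quadratic with negative discriminant'': the two worst cases you are combining (\(\theta=\beta\) versus \(\theta\) at the regime boundary) do not co-occur, so the bound is doubly lossy, and numerically its margin collapses — for \(z\to-x\) on the cut with \(\sqrt x\approx0.4\) the two sides agree to within a relative error of about \(2\cdot10^{-5}\). Whether it even holds throughout \(\mathbb C\setminus(-\infty,0]\) would require controlling the joint range of \((|a(z)|,|b(z)|)\), a task comparable in delicacy to the paper's inequality (3.9), which the author only settles by a level-set argument reducing to purely imaginary \(z\) followed by a quintic polynomial inequality. As it stands, the hardest part of the lemma — precisely the part where the paper has to work, and where the estimate degenerates as \(z\) approaches the cut (reflecting the appearance of a second maximum there) — is asserted rather than proved.
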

\begin{proof}For \(z\in\mathbb{C}\backslash(-\infty,0]\) and \(t_1, t_2, t_3 \in [-\pi,\pi]\), we want to show the estimate
\begin{equation}\label{3.1}
\left\vert\frac{(1+b(z)e^{i t_3})(1+b(z)e^{-i (t_1 +t_3)})}{(1-a(z)e^{i t_2})(1-a(z)e^{i (t_1 -t_2)})}\right\vert\leq \left\vert\frac{(1+b(z))^2}{(1-a(z))^2}\right\vert, 
\end{equation}
where the inequality is an equality, if and only if we have \(t_1= t_2= t_3=0\). Using the definitions of the functions \(a(z)\) and \(b(z)\), an elementary calculation shows that (\ref{3.1}) can be equivalently formulated the following way: For \(z\in\mathbb{C}\) with \(\Re(z)>0\) and \(t_1, t_2, t_3 \in [-\pi,\pi]\) we would like to know that
\begin{align}\nonumber
&\left\vert(\sqrt{1+z}+e^{i t_3}-1)(\sqrt{1+z}+e^{-i (t_1+t_3)}-1) \right\vert\\\label{3.2}
&\leq \left\vert(\sqrt{1+z}-e^{i t_2}+1)(\sqrt{1+z}-e^{i (t_1-t_2)}+1)\right\vert, 
\end{align}
where the inequality is an equality, if and only if we have \(t_1= t_2= t_3=0\). First of all, using the inequality of arithmetic and geometric means, we see for the left-hand side of (\ref{3.2})
\begin{align*}
&\left\vert(\sqrt{1+z}+e^{i t_3}-1)(\sqrt{1+z}+e^{-i (t_1+t_3)}-1) \right\vert\\
&\leq \frac{1}{2}\left\{\vert\sqrt{1+z}-1+e^{i t_3}\vert^2 +\vert\sqrt{1+z}-1+e^{-i (t_1 +t_3)}\vert^2\right\}\\
&=\vert\sqrt{1+z}-1\vert^2 +1+2\cos\left(t_3+\frac{t_1}{2}\right)\Re\{(\sqrt{1+z}-1)e^{i \frac{t_1}{2}}\}, 
\end{align*}
from which we immediately obtain
\begin{align}\nonumber
&\left\vert(\sqrt{1+z}+e^{i t_3}-1)(\sqrt{1+z}+e^{-i (t_1+t_3)}-1) \right\vert\\\label{3.3}
&\leq \max\{\vert\sqrt{1+z}-1+e^{-i \frac{t_1}{2}}\vert^2, \vert\sqrt{1+z}-1-e^{-i \frac{t_1}{2}}\vert^2\}.
\end{align}
Next, we want to prove that the right-hand side of (\ref{3.3}) is less than or equal to the right-hand side of (\ref{3.2}). To this end, we show the following estimate: For arbitrary \(\alpha\in [-\pi,\pi]\), \(z\in\mathbb{C}\) with \(\Re(z)>0\) and \(x\in[-\pi,\pi]\) we have
\begin{equation}\label{3.4}
\vert\sqrt{1+z}-1+e^{-i \alpha}\vert^2 \leq \vert\sqrt{1+z}+1-e^{i \alpha}e^{i x} \vert \vert\sqrt{1+z}+1-e^{i \alpha}e^{-i x}\vert, 
\end{equation}
where the inequality is an equality, if and only if \(x=\alpha=0\). In order to establish (\ref{3.4}), we study its right-hand side as a differentiable function of \(x\in[-\pi,\pi]\), which we shall denote for the moment by \(f(x)\). It is not difficult to see, that its derivative vanishes at a point \(x\), if and only if we have
\[\sin(x)=0 \quad \text{or}\quad (1+\vert \sqrt{1+z}+1\vert^2)\Re\{(\sqrt{1+z}+1)e^{-i\alpha}\}-2\cos(x)\vert \sqrt{1+z}+1\vert^2 =0.\]
This means, in the case
\begin{equation}\label{3.5}\vert\Re\{(\sqrt{1+z}+1)e^{-i\alpha}\}\vert< \frac{2\vert \sqrt{1+z}+1\vert^2}{1+\vert \sqrt{1+z}+1\vert^2},
\end{equation}
we have
\begin{align*}&\min_{x\in[-\pi,\pi]} f(x) = \min_{x\in[-\pi,\pi]}\vert\sqrt{1+z}+1-e^{i \alpha}e^{i x} \vert \vert\sqrt{1+z}+1-e^{i \alpha}e^{-i x}\vert\\
&=\min\{f(0), f(\pi), f(x^{\ast})\},
\end{align*}
where \(x^{\ast} \in (0,\pi)\) is determined by the condition
\[\cos(x^{\ast})=\frac{1+\vert \sqrt{1+z}+1\vert^2}{2\vert \sqrt{1+z}+1\vert^2} \Re\{(\sqrt{1+z}+1)e^{-i\alpha}\}.\]
On the other hand, in the remaining case
\[\vert\Re\{(\sqrt{1+z}+1)e^{-i\alpha}\}\vert\geq \frac{2\vert \sqrt{1+z}+1\vert^2}{1+\vert \sqrt{1+z}+1\vert^2},\]
we have
\[\min_{x\in[-\pi,\pi]} f(x) =\min\{f(0), f(\pi)\}.\]
According to these considerations, we will show three estimates. The first one will be
\begin{equation}\label{3.6}
\vert\sqrt{1+z}-1+e^{-i \alpha}\vert^2 \leq \vert\sqrt{1+z}+1-e^{i \alpha}\vert^2=f(0),
\end{equation}
where the inquality is an equality, if and only if \(\alpha=0\). Here, however, it is easy to see, that (\ref{3.6}) is equivalent to
\[\cos(\alpha)\Re\{\sqrt{1+z}\}\leq \Re\{\sqrt{1+z}\}.\]
The second estimate will be 
\begin{equation}\label{3.7}
\vert\sqrt{1+z}-1+e^{-i \alpha}\vert^2 < \vert\sqrt{1+z}+1+e^{i \alpha}\vert^2=f(\pi)
\end{equation}
and, under the condition (\ref{3.5}), the third estimate will be 
\begin{equation}\label{3.8}
\vert\sqrt{1+z}-1+e^{-i \alpha}\vert^2 < \vert\sqrt{1+z}+1-e^{i \alpha}e^{i x^{\ast}}\vert \vert\sqrt{1+z}+1-e^{i \alpha}e^{-i x^{\ast}}\vert=f(x^{\ast}).
\end{equation}
Let us first deal with the estimate (\ref{3.7}). An easy calculation shows that (\ref{3.7}) is equivalent to
\[\Re\{\sqrt{1+z}\}+\sin(\alpha)\Im\{\sqrt{1+z}\}+\cos(\alpha)>0.\]
Studying the extremal points of the function on the left-hand side, it turns out to be sufficient to know
\[(\Re\{\sqrt{1+z}\})^2>1+(\Im\{\sqrt{1+z}\})^2,\]
which certainly is true for values of \(z\) with \(\Re\{z\}>0\). In the next step, we will take care of the estimate (\ref{3.8}). Using the condition determining \(x^{\ast}\), we see the following:
\begin{align*}
&\vert\sqrt{1+z}+1-e^{i \alpha}e^{i x^{\ast}}\vert \vert\sqrt{1+z}+1-e^{i \alpha}e^{-i x^{\ast}}\vert\\
 &=\frac{\vert\sqrt{1+z}+1\vert^2-1}{\vert\sqrt{1+z}+1\vert}\vert\Im\{(\sqrt{1+z}+1)e^{-i\alpha}\}\vert\\
 &=(\vert\sqrt{1+z}+1\vert^2-1)\left(1-\left(\Re\left\{\frac{\sqrt{1+z}+1}{\vert\sqrt{1+z}+1\vert}e^{-i\alpha}\right\}\right)^2\right)^{1/2}.
\end{align*}
Now, using condition (\ref{3.5}), we obtain
\[\vert\sqrt{1+z}+1-e^{i \alpha}e^{i x^{\ast}}\vert \vert\sqrt{1+z}+1-e^{i \alpha}e^{-i x^{\ast}}\vert>\frac{(\vert\sqrt{1+z}+1\vert^2-1)^2}{1+\vert\sqrt{1+z}+1\vert^2}.\]
Moreover, applying the estimate
\begin{equation}\label{3.9}
\frac{(\vert\sqrt{1+z}+1\vert^2-1)^2}{1+\vert\sqrt{1+z}+1\vert^2} \geq\left(1+\frac{\vert z\vert}{\vert\sqrt{1+z}+1\vert}\right)^2= \left(1+\vert\sqrt{1+z}-1\vert\right)^2, 
\end{equation}
we readily arrive at (\ref{3.8}). Hence, in order to finish the proof, we finally have to verify the inequality in (\ref{3.9}) for values of \(z\) with \(\Re\{z\}>0\). Therefore, for any \(c>2\), we parameterize a part of the level set \(\vert\sqrt{1+z}+1\vert=c\), ``covering" the right half-plane, using
\[\gamma_c (t)=r_c (t)e^{i t}-1,\quad t\in\left[-\frac{\pi}{2},\frac{\pi}{2}\right],\]
where \(r_c (t) = \left(\sqrt{\cos(\frac{t}{2})^2-1+c^2}-\cos(\frac{t}{2})\right)^2\). Now, observing
\[\vert\gamma_c (t)\vert^2 =(r_c (t)-1)^2 +2(1-\cos(t))r_c (t),\]
we see that \(\vert\gamma_c (t)\vert\) is a decreasing function on \(\left[-\frac{\pi}{2},0\right]\), whereas it is increasing on \(\left[0,\frac{\pi}{2}\right]\). Hence, choosing an arbitrary \(z\) with \(\Re\{z\}>0\) and ``travelling" along its level set towards the imaginary axis (in the positive direction, if \(\Im\{z\}\geq0\), and in the negative direction, if \(\Im\{z\}<0\)), the inequality (\ref{3.9}) becomes sharper. Thus, it is sufficient to verify its validity only for purely imaginary values of \(z\). For convenience, let us introduce
\[g(x)=\sqrt{1+\sqrt{1+x^2}},\]
which is a strictly increasing function on \([0,\infty)\). Then, for values \(z=ix\) with \(x\geq 0\), the inequality in (\ref{3.9}) can be written as
\[\left(g(x)\left(g(x)+\sqrt{2}\right)-1\right)^2\geq \left(1+g(x)\left(g(x)+\sqrt{2}\right)\right)\left(1+\sqrt{g(x)\left(g(x)-\sqrt{2}\right)}\right)^2.\]
Hence, it is sufficient to show the inequality
\[\left(t(t+\sqrt{2})-1\right)^2\geq (1+t(t+\sqrt{2}))\left(1+\sqrt{t(t-\sqrt{2})}\right)^2, \quad t\geq \sqrt{2},\]
which, by an elementary calculation, can be reduced to the polynomial inequality
\[t^5-\sqrt{2}t^4-(4-2\sqrt{2})t^3+5t+\sqrt{2}\geq 0,\quad t\geq \sqrt{2}.\]
This inequality can be verified easily. For reasons of symmetry, this establishes (\ref{3.9}) for all purely imaginary values of \(z\), which provides the last step in the proof of the estimate in (\ref{3.1}). Finally, from the fact that we can give exact conditions for both inequalities (\ref{3.3}) and (\ref{3.4}) becoming equalities, we can deduce that the inequality in (\ref{3.1}) is an equality, if and only if \(t_1=t_2=t_3=0\).
\end{proof}

\section{Main Results}
Now we turn to the first main result which deals with the asymptotic behavior of the Apéry polynomials on the zero-free region \(\mathbb{C}\backslash(-\infty, 0]\).

\begin{theorem}\label{MAIN1}
For every complex \(z\in\mathbb{C}\backslash(-\infty,0]\) we have
\begin{equation}\label{ASYMP1}
B_{n} (z) \sim \frac{(1+\sqrt{z}+\sqrt{z+\sqrt{z}})^2}{2 (2\pi n \sqrt{z+\sqrt{z}})^{3/2}}\,\left(1+2\sqrt{z}+2\sqrt{z+\sqrt{z}}\right)^{2n}~,
\end{equation}
as \(n\rightarrow \infty\), where \(\sqrt{z} = \exp\left\{\frac{1}{2} (\log|z| +i\arg(z))\right\}\) with \(\arg(z)\in (-\pi, \pi)\). In particular
\[B_{n} (1) \sim \frac{(1+\sqrt{2})^2}{(2\pi \sqrt{2}n)^{3/2}} (1+\sqrt{2})^{4n},~~~~~~~\text{as}~~n\rightarrow\infty.\]
\end{theorem}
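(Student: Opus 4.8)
The plan is to realize $B_n(z)$ as a threefold complex contour integral over a product of circles, to move the circles onto the radii through a saddle point of the phase, and then to apply Theorem~\ref{MSP}, the decisive global-minimum hypothesis being supplied by Lemma~\ref{EST}. First I would build the integral representation: writing each of the four binomial factors in \eqref{AP} by a Cauchy integral, using $\binom{n}{k}=\frac{1}{2\pi i}\oint(1+w)^n w^{-k-1}\,dw$ for the two factors $\binom{n}{k}$ and $\binom{n+k}{k}=\frac{1}{2\pi i}\oint(1-w)^{-n-1}w^{-k-1}\,dw$ for the two factors $\binom{n+k}{k}$, one gets a fourfold integral; the sum over $k$ may be taken over all $k\ge0$ (the terms with $k>n$ contribute nothing), the resulting geometric series summed, and one of the four integrations carried out by residues. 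With the radii chosen admissibly this leads to
\[ B_n(z)=\frac{1}{(2\pi i)^3}\oint\oint\oint \frac{(1+w_1)^n(1+w_2)^n (w_1w_2w_3)^n}{(1-w_3)^{n+1}(w_1w_2w_3-z)^{n+1}}\,dw_1\,dw_2\,dw_3, \]
and one keeps track of the singularities $w_3=1$ and $w_1w_2w_3=z$ to justify the subsequent deformation of the contours (the factors $1+w_1$, $1+w_2$ are harmless, being in the numerator).

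Next I would locate the saddle point of $\log\Phi$, where $\Phi(w;z)=\frac{(1+w_1)(1+w_2)\,w_1w_2w_3}{(1-w_3)(w_1w_2w_3-z)}$. Solving $\grad\log\Phi=0$, the symmetry $w_1\leftrightarrow w_2$ suggests the branch $w_1=w_2$, which reduces the system to the quadratic $w_1^2-2\sqrt z\,w_1-\sqrt z=0$; its relevant root together with $w_3=\sqrt z/w_1$ gives the critical point $w_1=w_2=b(z)$, $w_3=a(z)$ in the notation of Lemma~\ref{EST}. After rescaling each $w_j$ by its saddle value (so that the saddle moves to $w_1=w_2=w_3=1$), applying the unimodular monomial change of variables that matches the notation of Lemma~\ref{EST}, and parametrising the circles by angles $t=(t_1,t_2,t_3)\in[-\pi,\pi]^3$, the integral takes the form $\int_{[-\pi,\pi]^3}e^{-np(t)}q(t)\,dt$ of Theorem~\ref{MSP}, with $t=0$ the saddle and $e^{-\Re p(t)}$ equal, up to a positive constant, to the function $H(w_1,w_2,w_3)$ of Lemma~\ref{EST}.

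Now I would check the hypotheses of Theorem~\ref{MSP}: $\grad p(0)=0$ holds by construction; $\det\Hess p(0)\ne 0$ and $\det\Re\Hess p(0)\ne 0$ follow from a direct computation of the $3\times3$ Hessian; and the essential requirement, that $\Re p$ attains its minimum on the torus exactly at $t=0$, is precisely Lemma~\ref{EST}. Theorem~\ref{MSP} then gives $B_n(z)\sim (2\pi/n)^{3/2}e^{-np(0)}q(0)/\sqrt{\det\Hess p(0)}$, and it remains to identify these quantities. Using $a(z)b(z)=\sqrt z$ and $a(z)+b(z)=2\sqrt{z+\sqrt z}$ one computes $e^{-p(0)}=\Phi(\text{saddle})=(1+2\sqrt z+2\sqrt{z+\sqrt z})^2$ and, after simplification, reduces the prefactor to the one in \eqref{ASYMP1}. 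For the branch of the square root: when $z>0$ all data are real and positive, so $\sqrt{\det\Hess p(0)}$ is unambiguously the positive root; since the saddle-point data and the right-hand side of \eqref{ASYMP1} are analytic in $z$ on $\mathbb{C}\setminus(-\infty,0]$ (one checks that $z+\sqrt z$ never meets $(-\infty,0]$ there, so the principal roots stay analytic), the correct branch — hence the validity of \eqref{ASYMP1} — propagates to all $z\in\mathbb{C}\setminus(-\infty,0]$ by analytic continuation, as announced in the Remark. Finally $z=1$ gives $\sqrt z=1$, $\sqrt{z+\sqrt z}=\sqrt2$, so $1+2\sqrt z+2\sqrt{z+\sqrt z}=3+2\sqrt2=(1+\sqrt2)^2$ and $(1+\sqrt z+\sqrt{z+\sqrt z})^2=(2+\sqrt2)^2=2(1+\sqrt2)^2$, which yields the displayed special case and recovers \eqref{Cohen}.

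The genuinely hard analytic fact — the strict global minimality of $\Re p$ on the torus — has been isolated in Lemma~\ref{EST}, so the main obstacle for the present proof is the bookkeeping: producing the correct threefold representation and justifying the deformation of the contours onto the saddle radii without crossing the loci $w_3=1$ and $w_1w_2w_3=z$, together with the Hessian computation and the determination of the branch by analytic continuation.
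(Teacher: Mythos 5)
Your proposal is correct and follows essentially the same route as the paper: a threefold contour integral representation for \(B_n\) (the paper derives it via iterated Hadamard products, which amounts to exactly your Cauchy-integral/geometric-series/residue computation, and your integrand is the paper's up to a unimodular monomial substitution), the saddle at \((b(z),b(z),a(z))\), Lemma \ref{EST} supplying the strict global maximum on the torus, Theorem \ref{MSP} for the local contribution, and determination of the square-root branch by analytic continuation from \(z>0\). The only cosmetic difference is the normalization of the three integration variables; all the substantive steps, including the Hessian computation and the identification \(e^{-p(0)}=(1+2\sqrt z+2\sqrt{z+\sqrt z})^2\), match the paper's argument.
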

\begin{proof}In the first instance, we want to provide a suitable integral representation for the Apéry polynomials (\ref{AP}). To this end, we consider the polynomials \(B_n\) to be constructed as an iterated Hadamard product of the Maclaurin series for the functions \((1+z)^n\) and \((1-z)^{-n-1}\). From a repeated application of the contour integral representation for Hadamard products, we obtain the multivariate complex contour integral representation
\[B_{n} (z)=\frac{1}{(2\pi i)^3} \int\limits_{\gamma_\delta}\int\limits_{\gamma_{\delta'}}\int\limits_{\gamma_\epsilon}\left\{\frac{(1+w_3)(1+\frac{z}{w_1 w_3})}{(1-w_2)(1-\frac{w_1}{w_2})}\right\}^n \,\frac{dw_3 dw_2 dw_1}{(1-w_2)(1-\frac{w_1}{w_2}) w_1 w_2 w_3},\]
\(0<\delta<\delta'<1\), \(\epsilon>0\), where we can choose the paths of integration as positively oriented circles around the origin with the radii \(\delta, \delta'\) and \(\epsilon\). Now, recalling that \(z\in\mathbb{C}\backslash(-\infty,0]\) and studying the function
\[\frac{(1+w_3)(1+\frac{z}{w_1 w_3})}{(1-w_2)(1-\frac{w_1}{w_2})},\]
we find that its saddle points are located at
\[\left(\left(\sqrt{z+\sqrt{z}}-\sqrt{z}\right)^2,\sqrt{z+\sqrt{z}}-\sqrt{z},\sqrt{z+\sqrt{z}}+\sqrt{z}\right),\]
where we will choose the one associated to the principle branches of the roots. Thus, we are lead to introduce the particular parameterizations
\begin{align*}
w_1 =&a(z)^2 e^{i t_1},~~~t_1 \in [-\pi,\pi],\\
w_2 =&a(z) e^{i t_2},~~~t_2 \in [-\pi,\pi],\\
w_3 =&b(z) e^{i t_3},~~~t_3 \in [-\pi,\pi],
\end{align*}
where the functions \(a(z)\) and \(b(z)\) are defined as in Lemma \ref{EST}. From this, we immediately obtain the representation
\begin{equation}\label{4.2}B_{n} (z)=\frac{1}{(2\pi)^3}\int\limits_{[-\pi,\pi]^3}\left\{\frac{(1+b(z) e^{i t_3})(1+b(z) e^{-i (t_1 +t_3})}{(1-a(z) e^{i t_2})(1-a(z) e^{i (t_1 -t_2)})}\right\}^n \frac{d(t_1, t_2, t_3)}{(1-a(z) e^{i t_2})(1-a(z) e^{i (t_1 -t_2)})}.
\end{equation}
We will write the integral in (\ref{4.2}) as
\[B_{n} (z)=\frac{1}{(2\pi)^3}\int\limits_{[-\pi,\pi]^3} e^{-n p(t)} q(t)\,dt,\]
where \(t=(t_1, t_2, t_3)\) and
\begin{align*}
&p(t)=\log\left(1-a(z) e^{i t_2}\right)+ \log\left(1-a(z) e^{i (t_1 -t_2)}\right)\\
&\quad\quad-\log\left(1+b(z) e^{i t_3}\right)-\log\left(1+b(z) e^{-i (t_1 +t_3)}\right),\\
&q(t)=\frac{1}{(1-a(z) e^{i t_2})(1-a(z) e^{i (t_1 -t_2)})}.
\end{align*}
Now, from Lemma \ref{EST} we know that \(\Re\{p(t)\}\) attains its global minimum exactly at the origin. Calculating the complex partial derivatives gives us
\[\grad p (0) =0\]
and
\[\Hess p (0)=\frac{1}{1+\sqrt{z}}\begin{pmatrix} 
 2z^{1/4}\sqrt{1+\sqrt{z}} & -\sqrt{z+\sqrt{z}}-\sqrt{z} & \sqrt{z+\sqrt{z}}-\sqrt{z}  \\
 -\sqrt{z+\sqrt{z}}-\sqrt{z}  & 2\sqrt{z+\sqrt{z}}+2\sqrt{z}  & 0 \\
 \sqrt{z+\sqrt{z}}-\sqrt{z} & 0 & 2\sqrt{z+\sqrt{z}}-2\sqrt{z} 
\end{pmatrix},\]
from which we readily obtain
\[\det\Hess p(0)=\frac{4 z^{3/4}}{(1+\sqrt{z})^{5/2}}.\]
Moreover, we observe that the matrix \(\Hess p (0)\) is of the form
\[\begin{pmatrix} 
 u+v & -v & u  \\
 -v  & 2v  & 0 \\
 u & 0 & 2u
\end{pmatrix},\]
where \(u\) and \(v\) are given by \(u=\frac{\sqrt{z+\sqrt{z}}-\sqrt{z}}{1+\sqrt{z}}\) and \(v=\frac{\sqrt{z+\sqrt{z}}+\sqrt{z}}{1+\sqrt{z}}\). It is easy to see that the real part of such a matrix always is positive definite, provided that we have \(\Re u>0\) and \(\Re v>0\). As these conditions are satisfied, we can conclude that \(\Re\Hess p(0)\) is positive definite. Hence, all conditions of Theorem \ref{MSP} are satisfied and we can obtain the asymptotic form (\ref{ASYMP1}) now from an application of (\ref{A}). Here, the choice of the branch of the square root in (\ref{A}), that is, the evaluation of the integral
\[\int\limits_{\mathbb{R}^3} e^{-\frac{1}{2} t^T\Hess p (0)t}dt=\frac{(2\pi)^{3/2}}{\sqrt{\det\Hess p (0)}},\]
can be established in a simple way by performing it for positive values of \(z\) first, so that the matrix \(\Hess p (0)\) is positive definite, and after that it can be extended by analytic continuation.
\end{proof}

Next we will study the behavior of the Apéry polynomials on the negative real axis expecting an oscillatory approximation in view of Lemma \ref{Z}.

\begin{theorem}\label{MAIN2}
For every \(x >0\) we have
\begin{align}\label{4.3}
B_n (-x) =& \frac{1}{(2\pi n)^{3/2}}\,\frac{(1+\sin(\frac{\pi}{2}\theta))(1+\frac{1}{2}\tan^2(\frac{\pi}{2}\theta))}{(\frac{1}{2}\tan(\frac{\pi}{2}\theta) \sqrt{1+\cos^2(\frac{\pi}{2}\theta)})^{3/2}} \, \left\{\frac{1+\sin(\frac{\pi}{2}\theta)}{\cos(\frac{\pi}{2}\theta)}\right\}^{2n} \\\nonumber
& \times \left(\cos\left\{f(\theta)+n\pi\theta\right\}+ o(1)\right),
\end{align}
as \(n\rightarrow \infty\), where the phase shift \(f(\theta)\) explicitly is given by
\[f(\theta)=2\arctan\left(\frac{\tan(\frac{\pi}{2}\theta) (1+\sin(\frac{\pi}{2}\theta))}{2+\sin(\frac{\pi}{2}\theta)}\right)-\frac{3}{2}\arctan\left(\frac{1}{\cos(\frac{\pi}{2}\theta)}\right)\]

and \(x>0\) is parameterized by
\begin{equation}\label{4.4}x=\frac{\sin^4 (\frac{\pi}{2}\theta)}{4 \cos^2(\frac{\pi}{2}\theta)},~~~~~0<\theta<1.
\end{equation}
\end{theorem}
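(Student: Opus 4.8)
The plan is to imitate the proof of Theorem~\ref{MAIN1}: I would start from the same contour integral representation of $B_n$ on circular contours (the one valid for every $z$), now specialized to $z=-x$. The decisive new feature is that, whereas for $z\notin(-\infty,0]$ the exponent $-n\,p$ has a single simple saddle point, at $z=-x$ it has \emph{two}, obtained by inserting the two determinations $\sqrt z=\pm i\sqrt x$; call them $P_+=(a_+^2,a_+,b_+)$, with $a_+=\sqrt{-x+i\sqrt x}-i\sqrt x$ and $b_+=\sqrt{-x+i\sqrt x}+i\sqrt x$, and $P_-=\overline{P_+}$. For $0<\theta<1$ these are distinct and simple (they coalesce only as $\theta\to0$ or $\theta\to1$), with $|a_-|=|a_+|$, $|b_-|=|b_+|$. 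I would then choose the radii of the three circles equal to $|a_+|^2$, $|a_+|$ and $|b_+|$ (admissibly, exactly as in Theorem~\ref{MAIN1}) and parametrize the torus by angles so that $P_+$ sits at the origin; $P_-$ then sits at a second point of the torus. The steps are: (i) show that on this torus the modulus of the factor raised to the $n$th power attains its global maximum exactly at $P_+$ and $P_-$; (ii) localize to small neighborhoods of $P_+$ and of $P_-$, the rest being exponentially smaller; (iii) apply Theorem~\ref{MSP} with $r=3$ to each neighborhood; (iv) add the two contributions and rewrite in terms of $\theta$.

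For step (i), I would express the modulus on the torus in the angular variables and, by the same elementary reduction as in the proof of Lemma~\ref{EST}, identify it with the left-hand side of (\ref{3.1}) (with $a(z),b(z)$ replaced by $a_+,b_+$); the parameter appearing on the negative axis is precisely the \emph{boundary} case of that estimate (a purely imaginary value of the transformed variable), for which the bound still holds but now with the two conjugate maximizers $P_+$, $P_-$ in place of the single one --- and, thanks to the explicit equality analysis carried out in the proof of Lemma~\ref{EST}, with no others.

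For steps (ii)--(iv): near $P_+$ the integrand is exactly the one of (\ref{4.2}) with $a(z),b(z)$ replaced by $a_+,b_+$, so the Hessian, the factor $q$, and the branch of the square root are the analytic continuations to $\sqrt z=i\sqrt x$ of the quantities computed for Theorem~\ref{MAIN1}; hence Theorem~\ref{MSP} yields for this neighborhood the right-hand side of (\ref{ASYMP1}) with $\sqrt z$ replaced by $i\sqrt x$, and the neighborhood of $P_-$ gives its complex conjugate. Writing $s=\sin(\tfrac\pi2\theta)$, $c=\cos(\tfrac\pi2\theta)$ and using (\ref{4.4}), so that $\sqrt x=s^2/(2c)$, a short computation gives
\[
\sqrt{z+\sqrt z}=\frac{s}{2c}(c+i),\qquad 1+2\sqrt z+2\sqrt{z+\sqrt z}=\frac{1+s}{c}\,e^{i\pi\theta/2},
\]
\[
1+\sqrt z+\sqrt{z+\sqrt z}=\frac{2+s}{2}+i\,\frac{s(1+s)}{2c},\qquad
\bigl|1+\sqrt z+\sqrt{z+\sqrt z}\bigr|^{2}=(1+s)\Bigl(1+\tfrac12\tan^{2}\tfrac\pi2\theta\Bigr),
\]
together with $\arg\sqrt{z+\sqrt z}=\arctan(1/c)$ and $\arg\bigl(1+\sqrt z+\sqrt{z+\sqrt z}\bigr)=\arctan\bigl(\tfrac{\tan(\pi\theta/2)(1+s)}{2+s}\bigr)$. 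Collecting moduli turns the growth factor into $\{(1+s)/c\}^{2n}$ and the algebraic prefactor into the one in (\ref{4.3}), while the two factors $e^{\pm in\pi\theta}$ from $(1+2\sqrt z+2\sqrt{z+\sqrt z})^{2n}$ together with the remaining arguments make the sum of the two conjugate contributions equal to $2\Re$ of a single one, i.e.\ proportional to $\cos\{n\pi\theta+f(\theta)\}$ with $f(\theta)=2\arg(1+\sqrt z+\sqrt{z+\sqrt z})-\tfrac32\arg\sqrt{z+\sqrt z}$, which is the stated expression; the factor $2$ from $2\Re$ cancels the $2$ in the denominator of the prefactor. One has to write $+o(1)$ inside the cosine rather than a multiplicative $\sim$ because $\cos\{n\pi\theta+f(\theta)\}$ is not bounded away from $0$; the $o(1)$ absorbs the relative errors of the two individual saddle contributions.

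I expect the only real difficulty to be step (i): pinning down the global maximum on the torus and, especially, excluding spurious maximizers. Because the strict part of the inequality in Lemma~\ref{EST} degenerates exactly at the boundary parameter relevant here, one must track its equality cases carefully to conclude that precisely the two conjugate saddles govern the integral. Everything downstream --- the two applications of Theorem~\ref{MSP} and the trigonometric bookkeeping --- is routine.
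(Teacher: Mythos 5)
Your proposal matches the paper's proof in every essential respect: the same integral representation (\ref{4.2}) continued onto the cut, the same identification of the two complex-conjugate saddle points governed by the degeneration of the equality case in Lemma \ref{EST} (the paper points specifically at inequality (\ref{3.7}) becoming an equality), the same splitting of the torus into two pieces each handled by Theorem \ref{MSP}, and the same recombination of the two conjugate contributions $G_n(-x)$ and $\overline{G_n(-x)}$ into the oscillatory form (\ref{4.3}). Your explicit trigonometric identities under the parameterization (\ref{4.4}) are correct and merely spell out what the paper dismisses as ``merely a matter of calculation.''
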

\begin{proof}
First we observe, that the integral representation (\ref{4.2}) remains valid, if we move the argument \(z\) from \(\mathbb{C}\backslash(-\infty,0]\) onto the negative real axis. For this purpose, we stipulate the functions \(a\) and \(b\) (see Lemma \ref{EST} for the definitions) to be continuous on the cut \((-\infty,0]\) ``from above", that is, for all \(x>0\) we have
\[a(-x)=\sqrt{-x+i\sqrt{x}}-i\sqrt{x},\quad b(-x)=\sqrt{-x+i\sqrt{x}}+i\sqrt{x},\]
where all roots take their principle value on \(\mathbb{C}\backslash(-\infty,0]\). Hence, if we have \(x>0\), we obtain for \(B_{n} (-x)\) the representation
\begin{equation}\label{4.5}\frac{1}{(2\pi)^3}\int\limits_{[-\pi,\pi]^3}\left\{\frac{(1+b(-x) e^{i t_3})(1+b(-x) e^{-i (t_1 +t_3})}{(1-a(-x) e^{i t_2})(1-a(-x) e^{i (t_1 -t_2)})}\right\}^n \frac{d(t_1, t_2, t_3)}{(1-a(-x) e^{i t_2})(1-a(-x) e^{i (t_1 -t_2)})}.
\end{equation}
In the case of \(z\in \mathbb{C}\backslash(-\infty,0]\), the proof of Theorem \ref{ASYMP1} shows, that the asymptotic behavior of \(B_n(z)\) is determined by a single saddle point located at 
\[\left(a(z)^2, a(z), b(z)\right)=\left(\left(\sqrt{z+\sqrt{z}}-\sqrt{z}\right)^2,\sqrt{z+\sqrt{z}}-\sqrt{z},\sqrt{z+\sqrt{z}}+\sqrt{z}\right).\]
Moving \(z\) onto the negative axis, let us say to \(z=-x\), we can see, that the point
\begin{equation}\label{4.6}\left(a(-x)^2, a(-x), b(-x)\right)
\end{equation}
still gives a saddle point for the function
\[\frac{(1+w_3)(1+\frac{-x}{w_1 w_3})}{(1-w_2)(1-\frac{w_1}{w_2})},\]
but this is not the only one located in the range of integration in (\ref{4.5}) anymore. Now, as complex conjugation coincides with a change of the branches in (\ref{4.6}), we see that a further saddle point is located at
\[\left(\overline{a(-x)}^2, \overline{a(-x)}, \overline{b(-x)}\right).\]
Moreover, a careful consideration of the arguments used in the proof of Lemma \ref{EST} shows that the inequality (\ref{3.7}) may become an equality now. As a consequence it turns out that the modulus of the function
\[\frac{(1+b(-x) e^{i t_3})(1+b(-x) e^{-i (t_1 +t_3})}{(1-a(-x) e^{i t_2})(1-a(-x) e^{i (t_1 -t_2)})}\]
attains its maximum not only in the origin, which corresponds with the original saddle point, but also in the second saddle point, that is at
\begin{equation}\label{4.7}t=(-4\arg(a(-x)), -2\arg(a(-x)), -2\arg(b(-x))).
\end{equation}
Hence, in order to study the asymptotic form of the integral (\ref{4.5}) via Theorem \ref{MSP}, we have to split the range of integration into two disjoint parts, say \([-\pi,\pi]^3= E_1 \cup E_2\), where the set \(E_1\) contains a neighborhood of the origin and \(E_2\) contains a neighborhood of the point (\ref{4.7}). Thus, we obtain
\[B_{n} (-x)= I_n^{(1)}(-x) + I_n^{(2)}(-x),\]  
where we define
\[I_n^{(1)}(-x) = \frac{1}{(2\pi)^3}\int\limits_{E_1} e^{-n p(t)} q(t)\,dt, \quad I_n^{(2)}(-x)=\frac{1}{(2\pi)^3}\int\limits_{E_2} e^{-n p(t)} q(t)\,dt,\]
with, in view of (\ref{4.5}), obvious definitions of the functions \(p\) and \(q\). Now, after relocating the saddle point in the second integral to the origin, both of the integrals \(I_n^{(1)}(-x)\), \(I_n^{(2)}(-x)\) can be evaluated asymptotically using Theorem \ref{MSP}, and their asymptotic contributions will be complex conjugates (which, in fact, causes the oscillatory nature of their sum). Hence, if we introduce
\[G_n(-x)=\frac{(1+i\sqrt{x}+\sqrt{-x+i\sqrt{x}})^2}{2 (2\pi n \sqrt{-x+i\sqrt{x}})^{3/2}}\,\left(1+2i\sqrt{x}+2\sqrt{-x+i\sqrt{x}}\right)^{2n},\]
then we have
\[I_n^{(1)}(-x) = G_n(-x)\left(1+o(1)\right),\]
and
\[I_n^{(2)}(-x) = \overline{G_n(-x)}\left(1+o(1)\right),\]
as \(n\rightarrow \infty\). Thus, for large values of \(n\) we obtain
\[B_{n} (-x)=G_n(-x)\left(1+o(1)\right) + \overline{G_n(-x)}\left(1+o(1)\right).\]
Now, using the parameterization for \(x>0\) given in (\ref{4.4}), it is merely a matter of calculation to find (\ref{4.3}).
\end{proof}

\begin{remark}The parameterization (\ref{4.4}) does not appear from nowhere, in fact, it is connected to the asymptotic distribution of the zeros of the polynomials \(B_n\). In this and in related cases, a candidate for a suitable parameterization can be obtained by means of the inverse function of the limiting distribution of the zeros, which we will derive in Theorem \ref{MAIN3} relying only on Theorem \ref{MAIN1}. 
\end{remark}

Our next aim is to study the behavior of the zeros of the Apéry polynomials using tools from complex potential theory. As pointed out in the introduction, in the first instance, we will be concerned with the polynomials
\[\tilde{B}_n (z)=(z+1)^n B_n \left(\frac{z-1}{z+1}\right)=\sum_{k=0}^n \binom{n}{k}^2 \binom{n+k}{k}^2 (z-1)^k (z+1)^{n-k}.\]
Recalling that the complex transformation \(T (x) = \frac{x-1}{x+1}\) maps the interval \((-1, 1)\) bijectively onto the negative real axis \((-\infty, 0)\), we can conclude from Lemma \ref{Z} that all zeros of the polynomials \(\tilde{B}_n\) are contained in the interval \((-1, 1)\).

\begin{theorem}\label{MAIN3}
Let \((\nu_n)\) denote the sequence of the normalized zero counting measures associated with \(\tilde{B}_n\) and let \((\mu_n)\) be the sequence of the normalized zero counting measures associated with \(B_n\). Then we have:
\begin{itemize}
\item[(i)] The sequence \((\nu_n)_n\) converges in the weak-star sense to a unit measure \(\nu\) supported on \([-1,1]\). This limit measure \(\nu\) is the unique solution for the weighted equilibrium problem on \([-1,1]\) with respect to the weight function
\begin{equation}\label{4.8}w(x)={\left|\sqrt{x+1}+2\sqrt{x-1}+2\sqrt{x-1+\sqrt{x-1}\sqrt{x+1}}\right|}^{-2},
\end{equation}
all roots defined by \(\sqrt{z} = \exp\left(\frac{1}{2}(\log|z|+i \arg(z))\right)\) with \(\arg(z)\in(-\pi,\pi]\), and its logarithmic potential is given by
\[\mathcal{U}^{\nu}(z)=\log(1+\sqrt{2})^4-2\log\left|\sqrt{z+1}+2\sqrt{z-1}+2\sqrt{z-1+\sqrt{z-1}\sqrt{z+1}}\right|,\quad z\in\mathbb{C}.\]
Moreover, the measure \(\nu\) is absolutely continuous with respect to the Lebesgue measure on \([-1,1]\) and its Radon-Nikodym derivative is given explicitly by
\begin{equation}\label{D}d\nu=\frac{1}{\pi \sqrt{1+x} (1-x)^{3/4} (\sqrt{2}+\sqrt{1-x})^{1/2}} ~dx.
\end{equation}

\item[(ii)] The sequence \((\mu_n)_n\) converges in the weak-star sense to a unit measure \(\mu\) supported on \((-\infty,0]\) possessing the logarithmic potential
\[\mathcal{U}^{\mu}(z)=4\log2-2\log\left|1+2\left(\sqrt{z}+\sqrt{z+\sqrt{z}}\right)\right|,\quad z\in\mathbb{C},\]
where, as before, all roots are defined by \(\sqrt{z} = \exp\left(\frac{1}{2}(\log|z|+i \arg(z))\right)\) with \(\arg(z)\in(-\pi,\pi]\).
Moreover, the measure \(\mu\) is absolutely continuous with respect to the Lebesgue measure and its Radon-Nikodym derivative is given explicitly by
\[d\mu=\frac{1}{\sqrt{2}\pi |x|^{3/4} (1+|x|)^{1/2} (\sqrt{|x|}+\sqrt{1+|x|})^{1/2}} ~dx.\]
\end{itemize}
\end{theorem}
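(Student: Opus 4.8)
The plan is to read off the limiting logarithmic potentials from the asymptotics in Theorem~\ref{MAIN1}, to upgrade pointwise convergence of the normalized log-moduli to weak-star convergence of the zero counting measures by the standard potential-theoretic compactness/uniqueness argument, and finally to identify the equilibrium measure and to compute the densities by Stieltjes--Perron inversion. For the first step, note that the leading coefficient of $B_n$ is $\binom{2n}{n}^2$ and that of $\tilde B_n$ (the coefficient of $z^n$) is $\sum_k\binom nk^2\binom{n+k}k^2=B_n(1)=b_n$; hence $\tfrac1n\log\binom{2n}{n}^2\to 4\log 2$ and $\tfrac1n\log b_n\to 4\log(1+\sqrt2)$ by Theorem~\ref{MAIN1}. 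Writing $B_n(z)=\binom{2n}{n}^2\prod_j(z-z_j^{(n)})$ gives $\tfrac1n\log|B_n(z)|=\tfrac1n\log\binom{2n}{n}^2-\mathcal U^{\mu_n}(z)$ for $z\notin(-\infty,0]$, and since the algebraic prefactor in (\ref{ASYMP1}) contributes only $o(n)$ to the logarithm, letting $n\to\infty$ yields $\mathcal U^{\mu_n}(z)\to 4\log 2-2\log|1+2(\sqrt z+\sqrt{z+\sqrt z})|$, which is the asserted $\mathcal U^{\mu}$. Applying (\ref{ASYMP1}) at $T(z)=\tfrac{z-1}{z+1}\in\mathbb C\setminus(-\infty,0]$, using $\tilde B_n(z)=(z+1)^nB_n(T(z))$, and the branch identity
\[
|z+1|\,\bigl|1+2\sqrt{T(z)}+2\sqrt{T(z)+\sqrt{T(z)}}\bigr|^2=\bigl|\sqrt{z+1}+2\sqrt{z-1}+2\sqrt{z-1+\sqrt{z-1}\sqrt{z+1}}\bigr|^2,\qquad z\notin[-1,1],
\]
gives in the same way $\mathcal U^{\nu_n}(z)\to\log(1+\sqrt2)^4-2\log|\sqrt{z+1}+2\sqrt{z-1}+2\sqrt{z-1+\sqrt{z-1}\sqrt{z+1}}|$ for $z\notin[-1,1]$, the asserted $\mathcal U^{\nu}$.

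For the second step, by Lemma~\ref{Z} and the fact that $T$ maps $(-1,1)$ onto $(-\infty,0)$, all zeros of $\tilde B_n$ lie in $[-1,1]$, so $(\nu_n)$ is tight. For any weak-star limit point $\sigma$ of a subsequence, $t\mapsto\log|z-t|$ is bounded and continuous on $[-1,1]$ for fixed $z\notin[-1,1]$, so $\mathcal U^{\sigma}(z)=\lim_k\mathcal U^{\nu_{n_k}}(z)$ equals the function found above on $\mathbb C\setminus[-1,1]$; since a measure supported on $[-1,1]$ is determined by the values of its potential off $[-1,1]$ (cf.\ \cite{Saff/Totik}), all limit points coincide, which defines $\nu$ and gives $\nu_n\overset{\ast}{\rightarrow}\nu$ with the stated potential. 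The density computation below shows that $\nu$ has no atom at $-1$, whence the mass of $\mu_n=T_\ast\nu_n$ near $\infty$ is uniformly negligible; thus $(\mu_n)$ is tight and $\mu_n\overset{\ast}{\rightarrow}T_\ast\nu=:\mu$, a unit measure supported on $(-\infty,0]$, whose potential is the one obtained in the first step.

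For the third step, write $Q=-\log w$ with $w$ as in (\ref{4.8}); the explicit form of $\mathcal U^{\nu}$ then gives $\mathcal U^{\nu}(x)+Q(x)=\log(1+\sqrt2)^4$ identically on $[-1,1]$, since the boundary value (from $\mathbb C\setminus[-1,1]$) of $|\sqrt{z+1}+\cdots|$ coincides with the expression in (\ref{4.8}) and $\mathcal U^{\nu}$ is continuous (the density (\ref{D}) obtained below lies in $L^p$ for some $p>1$). As that density is positive on $(-1,1)$, we have $\operatorname{supp}\nu=[-1,1]$, so the Frostman variational characterization of the weighted equilibrium measure (\cite{Saff/Totik}) identifies $\nu$ as its unique solution, with modified Robin constant $\log(1+\sqrt2)^4$; the displayed formula for $\mathcal U^{\nu}$ on $\mathbb C$ follows by continuity. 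Finally, by Stieltjes--Perron inversion the density of $\nu$ on $(-1,1)$ equals $-\tfrac1\pi\Im C^{\nu}(x+i0)$, where the Cauchy transform $C^{\nu}=-2\,\partial_z\mathcal U^{\nu}$ is explicit off $[-1,1]$; with $\mathcal U^{\nu}=\log(1+\sqrt2)^4-2\Re\log\Phi$, $\Phi(z)=\sqrt{z+1}+2\sqrt{z-1}+2\sqrt{z-1+\sqrt{z-1}\sqrt{z+1}}$, this reduces to $-\tfrac2\pi\tfrac{d}{dx}\arg\Phi(x+i0)$, and computing the boundary argument (routine, with care at $x=\pm1$) produces (\ref{D}). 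The density of $\mu$ follows either the same way from $\psi(z)=1+2(\sqrt z+\sqrt{z+\sqrt z})$, or directly from (\ref{D}) via the substitution $x=\tfrac{1+y}{1-y}$.

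I expect the main obstacles to be: (a) checking that the branches of the nested radicals transform consistently under the Möbius map $T$, so that the limiting potential of $(\nu_n)$ is exactly the one built from the weight (\ref{4.8}); (b) making the passage from pointwise convergence of $\tfrac1n\log|B_n|$ and $\tfrac1n\log|\tilde B_n|$ to weak-star convergence fully rigorous, including the uniform control of the mass of $\mu_n$ near infinity (which is why it is cleaner to argue first with the compactly supported $\nu_n$ and then push forward). The density computations themselves are elementary but require care with the argument functions at the branch points $\pm1$ and $0$.
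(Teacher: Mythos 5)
Your proposal is correct and follows essentially the same route as the paper: read off the limiting potential of \((\nu_n)\) from Theorem \ref{MAIN1} via the leading coefficient \(b_n=B_n(1)\), pass to the weak-star limit by compactness plus the unicity theorem for potentials off \([-1,1]\), identify \(\nu\) through the variational condition \(\mathcal{U}^{\nu}+Q=\mathrm{const}\) on \([-1,1]\), and transport everything to \((\mu_n)\) by the M\"obius pushforward. The only (cosmetic) differences are that you derive the density by Stieltjes--Perron inversion of the explicit potential rather than verifying the candidate density's potential by direct integration as the paper does, and that you are somewhat more explicit than the paper about controlling the mass of \(\mu_n\) near infinity under the pushforward.
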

\begin{proof}
In consideration of the results we gained so far, the proof of Theorem \ref{MAIN3} can be established by adapting the arguments used in \cite{Abel} to prove similar results in Theorem 3.3 on the binomial polynomials, which also gives a hint at the general applicability of this approach.
\begin{itemize}
\item[(i)] In the first step, we use the asymptotic behavior of the Apéry polynomials \(B_n\) on the zero-free region to obtain an explicit representation for the logarithmic potential of the limit distribution of zeros of the transformed polynomials \(\tilde{B}_n\). More precisely, for complex \(z\) we have
\begin{align*}
\mathcal{U}^{\nu_n} (z)=&\int\log|z-t|^{-1}\,d\nu_n(t)\\
&=\log|B_n (1)|^{1/n} -\log\left(|z+1|\left|B_n \left(\frac{z-1}{z+1}\right)\right|^{1/n}\right).
\end{align*}
For \(z\in\mathbb{C}\backslash[-1,1]\) we therefore obtain by an application of Theorem \ref{MAIN1}
\begin{align}\label{4.9}\nonumber
\lim_{n\rightarrow\infty}\mathcal{U}^{\nu_n} (z) =&\log(1+\sqrt{2})^4\\
&-2\log\left|\sqrt{z+1}+2\left(\sqrt{z-1}+\sqrt{z-1+\sqrt{z-1}\sqrt{z+1}}\right)\right|.
\end{align}
Hence, using Helly's selection principle (\cite[p. 3]{Saff/Totik}) and Carleson's unicity theorem (\cite[p. 123]{Saff/Totik}) we can conclude that the sequence \((\nu_n)\) converges in the weak-star sense to a unit measure \(\nu\) supported on \([-1,1]\) and its logarithmic potential \(\mathcal{U}^{\nu}\) is given by the right-hand side of the equation (\ref{4.9}) on \(\mathbb{C}\backslash[-1,1]\). Now, using that the potential \(\mathcal{U}^{\nu}\) and the right-side of the equation (\ref{4.9}) is continuous on \(\mathbb{C}\) with respect to the fine topology (see, e.g. \cite[p. 58]{Saff/Totik}) and noting that the boundary of \(\mathbb{C}\backslash[-1,1]\) in the fine topology coincides with its boundary in the Euclidean topology (see \cite[p. 61]{Saff/Totik}) we obtain for all \(z\in\mathbb{C}\)
\[
\mathcal{U}^{\nu} (z)=\log(1+\sqrt{2})^4-2\log\left|\sqrt{z+1}+2\left(\sqrt{z-1}+\sqrt{z-1+\sqrt{z-1}\sqrt{z+1}}\right)\right|.\]
As the measure \(\nu\) clearly has finite logarithmic energy and as we have
\[\mathcal{U}^{\nu} (z)+2\log\left|\sqrt{z+1}+2\left(\sqrt{z-1}+\sqrt{z-1+\sqrt{z-1}\sqrt{z+1}}\right)\right|=\log(1+\sqrt{2})^4,\]
we can deduce from Remark 1.5 in \cite[p. 28]{Saff/Totik}, that \(\nu\) solves the equilibrium problem on \([-1,1]\) with respect to the weight function \(w(x)\) defined in (\ref{4.8}) and its modified Robin constant is given by \(4\log(1+\sqrt{2})\). Now, in order to prove that the Radon-Nikodym derivative of \(\nu\) is given by the expression (\ref{D}), using Carleson's unicity theorem, it is sufficient to verify for \(z\in\mathbb{C}\backslash[-1,1]\) the identity
\[\int_{-1}^1 \frac{\log\vert z-x\vert^{-1}}{\pi \sqrt{1+x} (1-x)^{3/4} (\sqrt{2}+\sqrt{1-x})^{1/2}}\,dx=\mathcal{U}^{\nu} (z),\]
which is merely a matter of integral calculus. However, this modus operandi requires to have an idea of the density function in advance. In many cases, this can be provided, for instance, by recovering locally the density from its logarithmic potential via the formula
\[-\frac{1}{2\pi}\left(\frac{\partial \mathcal{U}^{\nu}}{\partial n_{+}}+\frac{\partial \mathcal{U}^{\nu}}{\partial n_{-}}\right)\]
involving the normal derivatives of the potential (see, e.g., \cite{Saff/Totik}, Chapter 2).

\item[(ii)] Finally, we want to translate these results to the Apéry polynomials \(B_n\). Therefore, we can consider the measures \(\mu_n\) to be the images of the measures \(\nu_n\) with respect to the transformation \(T(x)=\frac{x-1}{x+1}\), i.e. \(\mu_n = \nu_n^T\). From the weak-star convergence of the sequence \((\nu_n)_n\) to \(\nu\) it follows that the sequence \((\mu_n)_n\) converges in the weak-star sense to the measure \(\mu=\nu^T\). Using this connection yields
\begin{align*}
\mathcal{U}^{\mu} (z)&=\mathcal{U}^{\nu} \left(\frac{z+1}{1-z}\right)-\mathcal{U}^{\nu}(-1)-\log|z-1|\\
&=4\log2-2\log\left|1+2\left(\sqrt{z}+\sqrt{z+\sqrt{z}}\right)\right|.
\end{align*}
Moreover, again using \(\mu=\nu^T\) the claimed Radon-Nikodym derivative for \(\mu\) can be obtained from those for \(\nu\) by an easy calculation.

\end{itemize} 
\end{proof}


\begin{thebibliography}{00}

\bibitem[1]{Abel} U. Abel, W. Gawronski, T. Neuschel, Binomial polynomials, Comput. Methods Funct. Theory 13 (2013) 163--180.
\bibitem[2]{Apery} R. Apéry, Irrationalité de \(\zeta(2)\) et \(\zeta(3)\) , Astérisque 61 (1979) 11--13.
\bibitem[3]{Askey} R. Askey, J. Wilson, A recurrence relation generalizing those of Apéry, J. Austr. Math. Soc. Ser. A 36 (2) (1984) 267--278.
\bibitem[4]{Beukers1} F. Beukers, A note on the irrationality of \(\zeta(2)\) and \(\zeta(3)\), Bull. London Math. Soc. 11 (1979) 268--272.
\bibitem[5]{Beukers2} F. Beukers, Another congruence for the Apéry numbers, J. Number Theory 25 (2) (1987) 201--210.
\bibitem[6]{Chen} W. Chen, E. Xia, The 2-log-concavity of the Apéry numbers, Proc. Amer. Math. Soc. 139 (2) (2011) 391--400.
\bibitem[7]{Fedoryuk} M. Fedoryuk, Saddle-point method (Russian), Nauka, Moscow, 1977.
\bibitem[8]{Gamkrelidze} R. Gamkrelidze, Analysis: Integral representations and asymptotic methods, Springer, 1989.
\bibitem[9]{Garaev} M. Garaev, F. Luca, I. Shparlinski, Catalan and Apéry numbers in residue classes, J. Combin. Theory Ser. A 113 (5) (2006) 851--865.
\bibitem[10]{Henrici} P. Henrici, Applied and Computational Complex Analysis Volume 2, Wiley, 1977.
\bibitem[11]{Hirschhorn} M. Hirschhorn, Estimating the Apéry numbers, Fibonacci Quart. 50 (2) (2012) 129--131.
\bibitem[12]{Luca} F. Luca, I. Shparlinski, Arithmetic properties of the Apéry numbers, J. Lond. Math. Soc. (2) 78 (3) (2008) 545--562.
\bibitem[13]{McIntosh} R. McIntosh, An asymptotic formula for binomial sums, J. Number Theory 58 (1) (1996) 158--172.
\bibitem[14]{Neuschel} T. Neuschel, A uniform version of Laplace's method for contour integrals, Analysis 32 (2) (2012) 121--136. 
\bibitem[15]{Olver} F. Olver, Asymptotics and Special Functions, AKP Classics, 1997.
\bibitem[16]{Saff/Totik} E. Saff, V. Totik, Logarithmic Potentials with External Fields, Springer, 1997.
\bibitem[17]{Sun} Z. Sun, On sums of Apéry polynomials and related congruences, J. Number Theory 132 (11) (2012) 2673--2699.
\bibitem[18]{Assche1} W. Van Assche, Analytic number theory and approximation, Coimbra Lecture Notes on Orthogonal Polynomials, Nova Science Publishers, New York , 2008, 197--229.
\bibitem[19]{Assche2} W. Van Assche, Multiple orthogonal polynomials, irrationality and transcendence, Contemporary Mathematics 236 (1999) 325--342.
\bibitem[20]{Poorten} A. van der Poorten, A proof that Euler missed...Apéry's proof of the irrationality of \(\zeta(3)\). An informal report, Math. Intelligencer 1 (4) (1978/79) 195--203.
\bibitem[21]{Zinn} J. Zinn-Justin, Path Integrals in Quantum Mechanics, Oxford University Press, 2010.
\end{thebibliography}
\end{document}